\newtheorem{theorem}{Theorem}[section]
\newtheorem{lemma}[theorem]{Lemma}
\newtheorem{proposition}[theorem]{Proposition}
\newtheorem{corollary}{Corollary}
\theoremstyle{definition}
\newtheorem{example}[theorem]{Example}
\newcommand*{\N}{\ensuremath{\mathbb{Z}}}
\renewcommand{\phi}{\varphi}
\date{November 18, 2011}
\title{A Cost-Minimizing Algorithm for School Choice
\thanks{Aksoy, Azzam, Coppersmith and Karaali were partially supported by NSF Grant DMS-0755540. Karaali was partially supported by a Pomona College Hirsch Research Initiation Grant and a National Security Agency Young Investigator Award (NSA Grant $\#$H98230-11-1-0186). Zhao was partially supported by the Hutchcroft Fund of the Department of Mathematics and Statistics at Mount Holyoke College. Zhu was partially supported by a Mount Holyoke College Ellen P. Reese Fellowship.}\thanks{This note is an abbreviated and revised version of a research article with the same title which was posted on the arXiV in October 2010. The original article included a proposal for a second criterion, a modification of the Index-Based Hungarian Mechanism to optimize this second criterion, detailed strategic analyses, requisite proofs, and several more examples. (See \texttt{http://arxiv.org/abs/1010.2312} version 1.) This current version was prepared to be presented as an invited paper at the ISAIM 2012 (International Symposium on Artificial Intelligence and Mathematics, Fort Lauderdale, FL. January 9--11, 2012) special session on computational social choice. 
After the conference the authors decided to change the direction of their project. We see this note as a record of our earlier thoughts in this direction. To the best of our knowledge, this version contains no incorrect assertions.}}
\author{S. Aksoy \\ University of Chicago \\ Chicago, IL \And
A. Azzam \\ University of Nebraska Lincoln \\Lincoln, NE \And
C. Coppersmith \\ Bryn Mawr College \\ Bryn Mawr, PA \And
J. Glass \\ California State University\\ East Bay, Hayward, CA \AND
G. Karaali \\Pomona College \\Claremont, CA\And 
X. Zhao \\ Mount Holyoke College\\  South Hadley, MA\And
X. Zhu \\ Mount Holyoke College\\ South Hadley, MA}
\begin{document}

\maketitle

\begin{abstract}
The school choice problem concerns the design and implementation of matching mechanisms that produce school assignments for students within a given public school district. In this note we define a simple student-optimal criterion that is not met by any previously employed mechanism in the school choice literature. We then use this criterion to adapt a well-known combinatorial optimization technique (Hungarian algorithm) to the school choice problem. 
\end{abstract}

\section{Introduction}
\label{S:Introduction}

School choice policies are processes by which families have some say in determining where their children go to school.  Since the late eighties such policies have been adopted by many school districts across the nation.  Before school choice, students were typically assigned to public schools according to proximity.  Since wealthy families have the means to move to areas with desirable or reputable schools, such families have always had \emph{de facto} school choice.  Children in families that could not afford such a privilege were left with no other option than to attend the closest school - whether or not the school was desirable and/or was a good fit.  Thus school choice has been celebrated as a successful tool giving more families the power to shape their children's education, regardless of socioeconomic background.

In many school districts where funding and experienced teachers are lacking, school quality is uneven and often a small number of schools are strongly preferred over others. Since it is not possible to assign all students to their top choice school, the question of \emph{how} to assign students to schools is often regarded as the central issue in school choice. In order to safeguard parents who seek to have their children attend schools conveniently within walking distance, at which a sibling is enrolled, or those offering need-based programs, districts define and adhere to a handful of school {\it priorities} which encapsulate such constraints. Thus school choice can be viewed as a
two-sided matching problem. An extensive study of two-sided matching problems can be found in \cite{RoSo90}; a more recent historical overview is \cite{Ro08}.

Previous work on school choice as a matching problem evaluates assignments using the notions of stability, Pareto efficiency and strategyproofness. Though all worthy considerations, these do not necessarily suffice to promote the most desirable outcomes.
In the context of school choice, stability corresponds to preventing priority violations. A priority violation occurs when a student desires a school more than the school to which she was assigned, and has higher priority than a student assigned to her desired school. Preventing priority violations is desirable for a very pragmatic reason:
Students whose priorities are violated may have legitimate grounds for legal action. Even without legal recourse, it is often felt that students are ``entitled" to schools in which they have been prioritized.
However the focus on avoiding priority violations in current school choice mechanisms leads to documented inefficiencies.
See \cite{AbPaRo09}, \cite{ErEr08}, \cite{Ke10}, \cite{Ro82} for more on this potential tradeoff between stability and efficiency.

In this note, we propose an approach to the school choice problem which focuses on student preferences rather than school priorities.

Our preference index naturally associates a ``cost" to each matching and as a result we conceptualize the school choice problem as a ``cost minimizing" assignment problem. In this context, a lower cost assignment corresponds to a matching that in some sense more closely meets student preferences. We can then introduce a new mechanism, adapted from a well-known combinatorial optimization algorithm, that produces a matching minimizing this cost, i.e., meeting student preferences as closely as possible. With respect to our index, the assignments produced by this mechanism often outperform the outcomes of standard mechanisms used or proposed in recent literature. Furthermore our method may be modified to respect and utilize student preference compatibilities. 
In cities without well-defined or legally required priorities (e.g. those that use whole-city lotteries), our approach provides policy makers two possible ways to create the most optimal student matching. Even cities committed to respecting student priorities may find these ideas valuable as priorities may indeed be incorporated at an intermediate or a final stage, see the relevant discussion in \S\ref{S:Conclusion}.

%%%%%%%%%%%%%%%%%%%
%%%%%%%%%%%%%%%%%%%
%%%%%%%%%%%%%%%%%%%
%%%%%%%%%%%%%%%%%%%

\subsection{Research background}
\label{SS:LitReview}

School district policy decisions have long provided active lines of inquiry for public policy designers, operations researchers, economists and education administrators. Much of the relevant work has focused on designing school district boundaries in order to optimize various measures. For a diverse yet representative selection of work in this vein, see \cite{BrKn05}, \cite{CaShGuWe04}, \cite{FeGu90}, \cite{FrKo73}.

In our work we focus on assignment policy as a mechanism design problem, which provides a natural framework to investigate means of implementing social goals (cf.\ \cite{Ma08}). In the following we will refer to three specific mechanisms. The first two were introduced in \cite{AbSo03} while the third was presented in \cite{Ke10}.
\begin{enumerate}
\item Student-Optimal Stable Matching Mechanism (SOSM)
\item Top Trading Cycles Mechanism (TTC)
\item Efficiency Adjusted Deferred Acceptance Mechanism (EADAM)
\end{enumerate}

SOSM adapts the famous Gale-Shapley Deferred Acceptance (DA) algorithm \cite{GaSh62} to the school choice problem. It is well established as a stable and strategyproof mechanism that has already been implemented in several large urban school districts \cite{AbPaRo09}, \cite{AbPaRoSo06}. However, when applied to large-scale data SOSM may lead to some welfare losses \cite{Ke10}. TTC is an alternative mechanism which promotes efficiency as opposed to stability, and is also strategyproof. The basic algorithm is to create trading cycles alternating between students and schools and to allow efficient matchings.
EADAM is proposed in \cite{Ke10} as a way to alleviate some of the efficiency costs of stability by iteratively running SOSM and modifying the preferences of any interrupters (who are students who cause others to be rejected from a school which later on rejects them) such that the SOSM outcome is Pareto dominated. As any Pareto domination of SOSM will lead to priority violations (cf.\ \cite{GaSh62}), EADAM leads to at least one priority violation. We will not need the specific processes in our work.

Recent literature also examines various real-life mechanisms such as those from Boston \cite{APRS05}, Chicago \cite{CuJaLe06}, Milwaukee \cite{GrPeDu99}, \cite{Ro98}, and New York City \cite{APR05}.

%%%%%%%%%%%%%%%%%%%
%%%%%%%%%%%%%%%%%%%
%%%%%%%%%%%%%%%%%%%
%%%%%%%%%%%%%%%%%%%

\subsection{Notation, basic terms and our model}
\label{SS:Notation}

Let $I$ denote a nonempty set of students, and $S$ a nonempty set of schools. A {\bf matching} $M: I\to I\times S$ is a function that associates every student with exactly one school, or potentially no school at all. Write $\mathfrak{M}$ for the set of matchings. 
We write $M_i = s$ if $M(i) = (i,s)$.
For all $s\in S$, we let $q_{s}$ denote the {\bf capacity} of $s$.

A {\bf preference profile} for a student $i \in I$, written ${P}_i$,
is a tuple $(S_{1},\dots,S_{n})$ where the $S_{j}$'s form a partition of $S$ and every element of $S_{j}$ is preferred to every element of $S_{k}$ if and only if $j<k$.
Define the {\bf ranking function} $\phi_i : S \rightarrow \N$  of a student $i \in I$ by letting $\phi_i(s)$ denote $i$'s {ranking} of $s \in S$. In other words $\phi_i(s) = j$ if $s \in S_j$. When each $S_{j}$ is singleton, we say that $i$'s preference profile is {\bf strict}, (in which case we can view $P_i$ as an $n$-vector).  If $s_{k},s_{l}\in S_{j}$ for some $j,k\neq l$, then we say that the student is {\bf indifferent} between $s_{k}$ and $s_{l}$. If $i$ prefers $s_{k}$ to $s_{l}$, we write $s_k \succ_i s_l$, or simply $s_k \succ s_l$ if $i$ is unambiguous. We denote a set consisting of preference profiles for each student in $I$ by ${\bf P} = \{P_i : i \in I\}$ and the space of all such sets is denoted by $\mathfrak{P}$.

A {\bf priority structure} for a school $s \in S$, written ${\Pi}_s$,
is a tuple $(I_{1},\dots,I_{n})$ where the $I_{j}$'s form a partition of $I$ and every element of $I_{j}$ is preferred to every element of $I_{k}$ if and only if $j < k$. When each $I_{j}$ is singleton, we say that $s$'s priority structure is {\bf strict}, (in which case we can view $\Pi_s$ as an $n$-vector).  If $i_{k},i_{l}\in I_{j}$ for some $j,k\neq l$, then we say that the school is {\bf indifferent} between $i_{k}$ and $i_{l}$. If $s$ prefers $i_{k}$ to $i_{l}$ we write
$i_k \succ_s i_l$, or simply $i_k \succ i_l$ if $s$ is unambiguous from context. We denote a set consisting of priority structures for each school in $S$ by ${\bf \Pi} = \{\Pi_s : s \in S\}$ and the space of all such sets is denoted by $\mathfrak{\Pi}$.

A matching $M'$ {\bf (Pareto) dominates} $M$ if $M'_i \succ_i M_i$ for all $i$ and $M'_j \succ_j M_j$ is strict for some $j$. A {\bf (Pareto) efficient matching} is a matching that is not (Pareto) dominated.

A {\bf matching mechanism} $\mathcal{M}: \mathfrak{P} \times \mathfrak{\Pi} \to \mathfrak{M}$
is a function that takes an ordered pair $(\textbf{P}, {\bf \Pi}) \in \mathfrak{P} \times \mathfrak{\Pi}$ of preferences and priorities and produces a matching.

Let $\Pi_{s}$ be a priority structure for school $s$. We say that a matching $M$ {\bf violates the priority} of $i\in I$ for $s$ if there exist some $j\in I$ and $s^{\prime} \in S$ such that
\begin{enumerate}
\item $M_j=s$, $M_i = s^{\prime}$: $j$ gets assigned $s$ under $M$ and $i$ gets assigned $s^{\prime}$ under $M$.
\item $s \succ_i s^{\prime}$: $i$ prefers attending $s$ over $s^{\prime}$;  and
\item  $i \succ_s j$: $s$ prioritizes $i$ over $j$.
\end{enumerate}
We say that a matching $M$ is {\bf stable} if
\begin{enumerate}
\item $M$ does not violate any priorities.
\item No student is matched to a lower-ranked school when a more preferred school is unfilled.
\end{enumerate}
A {\bf stable mechanism} is a mechanism that always produces stable matchings.
A mechanism is \textbf{strategyproof} if no student can ever receive a more preferred school by submitting falsified, as opposed to truthful, preferences.

In the above context, the goal of the school choice problem is to find a matching mechanism $\mathcal{M}$ which satisfies certain criteria.

%%%%%%%%%%%%%%%%%%%
%%%%%%%%%%%%%%%%%%%
%%%%%%%%%%%%%%%%%%%
%%%%%%%%%%%%%%%%%%%

\section{A Natural criterion for evaluating assignment mechanisms}
\label{S:NewCriteria}

The current literature on school choice proposes mechanisms that balance student preferences and school priorities by using stability, (Pareto) efficiency, and strategyproofness as the standard criteria for evaluating the desirability of a given mechanism. In our work, we emphasize student preferences. With this spirit, we introduce a new student-optimal criterion to expand the scope of what constitutes a good mechanism.

The question of what criteria to use to judge the quality or desirability of a mechanism is a difficult one; for example, see \cite{Mc09} where McFadden argues that tolerance of behavioral faults should be included in such a list of criteria. The goal of school districts when designing a school choice policy is not singular (unlike, for instance, the case of auction design where our sole objective is to maximize selling price). Thus, it is especially important for us to define a yardstick by which we measure the success of a given school choice mechanism. We could technically define the best school mechanism as one that minimizes the government education funding budgets, produces the most elite students, or improves the conditions of less-advantaged students the most, etc. Obviously, the ultimate design depends on how we define the objective/criteria of the school choice problem.

In this section we propose a simple criterion which provides a possible interpretation of how to best honor student preferences.
Our index 
inherently captures the utilitarian objective of improving the outcome for as many students as possible.

%%%%%%%%%%%%%%%%%%%
%%%%%%%%%%%%%%%%%%%
%%%%%%%%%%%%%%%%%%%
%%%%%%%%%%%%%%%%%%%

\subsection{A preference reverence index}
\label{SS:Index}

We begin with an example from \cite{Ro82} which illustrates the possible tradeoff between stability and efficiency mentioned in \S\ref{S:Introduction}.
Assume there are three schools, $s_{1}, s_{2}, s_{3}$ and three students $i_{1}, i_{2}, i_{3}$. The priorities of the schools and the preferences of the students are given by:
\[
\begin{array}{ccc} s_{1}: i_{1} \succ i_{3} \succ i_{2}& \qquad & i_{1}:s_{2} \succ s_{1} \succ s_{3}\\ s_{2}:i_{2} \succ i_{1} \succ i_{3}& \qquad & i_{2}: s_{1} \succ s_{2} \succ s_{3}\\ s_{3}:i_{2} \succ i_{1} \succ i_{3}& \qquad & i_{3}: s_{1} \succ s_{2} \succ s_{3} \end{array}\]
Here, the only stable matching is:
\[
\begin{pmatrix}i_{1}& i_{2}& i_{3}\\ s_{1}&s_{2}&s_{3}\end{pmatrix}.\]
But this matching is (Pareto) dominated by:
\[
\begin{pmatrix}i_{1}& i_{2}& i_{3}\\ s_{2}&s_{1}&s_{3}\end{pmatrix}.\]
We see that the second matching (Pareto) dominates the first matching because it gives $i_1$ and $i_2$ schools they preferred over their stable matching. However, this (Pareto) efficient matching is not stable because $i_2$ now violates $i_3$'s priority in $s_1$.

Now, we construct a ``preference reverence index" that, on the students' side, would quantify how much their preferences were dismissed, and a ``priority reverence index" that, on the schools' side, would quantify how much their priorities were dismissed. We assign numbers to the preferences and priorities of schools. Namely, we assign a 0 to a student's first choice, a 1 to their second choice, and so on. For the stable matching we have:
\begin{center}
Schools: 0 + 0 + 2 = 2 (priority reverence index)

Students: 1+ 1+ 2 = 4 (preference reverence index)
\end{center}

And, for the (Pareto) efficient matching we have:
\begin{center}
Schools: 1 + 2 + 2 = 5  (priority reverence index)

Students: 0 + 0 + 2 = 2  (preference reverence index)
\end{center}

We see that the (Pareto) efficient matching has better served the student preferences (lower preference reverence index) at the expense of the school priorities (higher priority reverence index). In fact, by the definition of (Pareto) efficiency, all (Pareto) dominations of stable matchings under SOSM improve how well student preferences are honored, thereby lowering the preference index.

Let us make the above precise.

Let $I$ be a nonempty set of students, and $S$ be a nonempty set of $m$ schools. Recall that for any $i\in I$, $s\in S$, $\phi_i(s)$ is $i$'s ranking of $s$ and for any matching $M : I \rightarrow I \times S$, $M_i = s$ denotes that $M(i) = (i,s)$.
Let $\mathfrak{M}$ be the set of matchings.
Define $\mu: \mathfrak{M} \to \N$ by
\[
\mu(M)=\sum_{i \in I}\left ( \phi_i(M_i)-1 \right ) .\]
For any given $M \in \mathfrak{M}$ we will call $\mu(M)$ the {\bf preference reverence index} of $M$ or simply the {\bf preference index}.

For a given set of schools and students each equipped with priorities and preferences respectively, if there exists a matching $M$ such that $\mu(M)=0$, we will say that the students are {\bf preference compatible}.  If there exists no such matching, the students are {\bf preference incompatible}. In \S\S\ref{SS:RankCompatibility} we will refine this notion further.

Since $\mathfrak{M}$ is finite, $\mu(\mathfrak{M})$ is finite and hence there exists some $M\in \mathfrak{M}$ such that $\mu(M)\le \mu(M')$ for all $M'\in \mathfrak{M}$. We will describe a method of seeking and locating such a minimal index matching in \S\ref{S:HA}. Here we focus on some properties of our new index.

First we investigate how our index relates to the standard notion of (Pareto) efficiency. Here is a useful lemma:
\begin{lemma}
Let $M$, $M^{\prime} : I \rightarrow I \times S$ be two matchings. If $M$ (Pareto) dominates $M^{\prime}$, then $\mu(M) < \mu(M^{\prime})$.
\end{lemma}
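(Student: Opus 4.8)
Since $\mu$ depends on a matching only through the values $\phi_i(M_i)$, the plan is to translate the hypothesis about the preference relations $\succ_i$ into inequalities between these rankings and then sum. The single ingredient I need is the monotonicity of each ranking function: for a fixed student $i$ with $P_i=(S_1,\dots,S_n)$, if $s\succeq_i s'$ then $\phi_i(s)\le\phi_i(s')$, and if $s\succ_i s'$ then $\phi_i(s)<\phi_i(s')$. I would establish this directly from the definition of a preference profile: writing $s\in S_a$ and $s'\in S_b$, so that $\phi_i(s)=a$ and $\phi_i(s')=b$, the defining property of $P_i$ (every element of $S_j$ is preferred to every element of $S_k$ if and only if $j<k$) rules out $b\le a$ when $s\succ_i s'$ and forces $a=b$ when $i$ is indifferent between $s$ and $s'$; combining the cases gives the weak statement.

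Granting this, the argument is short. If $M$ (Pareto) dominates $M'$ then, by the definition of domination, $M_i\succeq_i M'_i$ for every $i\in I$ and $M_j\succ_j M'_j$ for at least one $j\in I$. Applying the monotonicity termwise gives $\phi_i(M_i)-1\le\phi_i(M'_i)-1$ for all $i$, with a strict inequality in the $j$-th term. Since $I$ is finite I may add these up, and the one strict term makes the total strict:
\[
\mu(M)=\sum_{i\in I}\bigl(\phi_i(M_i)-1\bigr)<\sum_{i\in I}\bigl(\phi_i(M'_i)-1\bigr)=\mu(M'),
\]
which is exactly the claim.

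There is no serious obstacle here; the only point needing a word of care is the role of the empty school $0_s$, i.e., making sure $\phi_i$ is meaningfully defined (and the comparison $\succeq_i$ makes sense) when a student is unmatched under $M$ or $M'$. I would dispose of this by adopting the convention that $0_s$ lies in the last block of every $P_i$, so that $\phi_i(0_s)$ is maximal and an unmatched outcome is least preferred; with this convention the termwise monotonicity still applies and the computation above goes through verbatim.
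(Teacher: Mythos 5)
Your proof is correct and follows essentially the same route as the paper's: one strictly improved student gives a strict inequality $\phi_j(M_j)<\phi_j(M'_j)$, all other terms are weakly ordered, and summing over the finite set $I$ yields $\mu(M)<\mu(M')$. Your extra care about the monotonicity of $\phi_i$ and the convention for the empty school $0_s$ is a reasonable tightening of details the paper leaves implicit, but it does not change the argument.
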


It clearly follows that our index is consistent with the more standard notion of (Pareto) efficiency. However, the index can also distinguish between two Pareto incomparables, and thus provide a way to define different levels of efficiency. Consider for instance the following:

\begin{example} Assume we have three students and three schools, and each school has one spot. The student preferences over the schools are given as:
\[
\begin{array}{cc} i_{1}:s_{1} \succ s_{2} \succ s_{3}\\ i_{2}: s_{3} \succ s_{2} \succ s_{1}\\ i_{3}:s_{3} \succ s_{1} \succ s_{2}\end{array}
\]
Here are two possible Pareto efficient matchings:
\[ \text{Pareto Efficient Matching } \#1 \quad (\mu=1): 
\begin{pmatrix}i_{1}& i_{2}&i_{3}\\ s_{1}&s_{2}&s_{3}\end{pmatrix}\]
\[ \text{Pareto Efficient Matching } \#2 \quad (\mu=2): 
\begin{pmatrix}i_{1}& i_{2}&i_{3}\\ s_{1}&s_{3}&s_{2}\end{pmatrix}\]

Both solutions are (Pareto) efficient because in each, there is no way to make any of the students better off without making another student worse off. However, they are (Pareto) incomparable as moving from one to the other would necessarily harm at least one student. When we compare the two in terms of their preference indices, we see that one has a lower index than the other, and hence in terms of the index, Matching 1 is preferable to Matching 2. In other words, all Pareto efficient matchings are \textit{not} made equal, and the preference index provides a way in which we can differentiate between (Pareto) efficient matchings. We can thus have some basis by which to rate some ``better" while rating others ``worse" provided that we are only given a list of ordinal preferences.
\end{example}

Next we focus on our index and its relation to stability. We begin with a result that follows from the above Lemma:
\begin{corollary}
Let $\mathcal{M}$ be a stable mechanism and let $\mathcal{DA}$ be the deferred acceptance algorithm. Then for any preference profile, $\mu(\mathcal{DA}(P))  \le \mu(\mathcal{M}(P))$.
\end{corollary}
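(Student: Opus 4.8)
The plan is to reduce the corollary to the Lemma by invoking the classical optimality property of the deferred acceptance algorithm. Recall that $\mathcal{DA}$ produces the \emph{student-optimal} stable matching: for every student $i$ and every stable matching $N$, we have $\phi_i(\mathcal{DA}(P)_i) \le \phi_i(N_i)$, i.e.\ no student strictly prefers her assignment under any other stable matching to her $\mathcal{DA}$ assignment. This is the Gale--Shapley theorem and may be cited from \cite{GaSh62}. Since $\mathcal{M}$ is a stable mechanism, $\mathcal{M}(P)$ is in particular a stable matching, so the inequality $\phi_i(\mathcal{DA}(P)_i) \le \phi_i(\mathcal{M}(P)_i)$ holds for every $i \in I$.

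First I would fix an arbitrary preference profile $P$ and set $M = \mathcal{DA}(P)$ and $M' = \mathcal{M}(P)$. If $M = M'$ there is nothing to prove, so assume otherwise. If $M$ strictly Pareto dominates $M'$, the conclusion is immediate from the Lemma. The only subtlety is that, a priori, $M$ need not \emph{strictly} dominate $M'$ in the sense of the Lemma — it could happen that $M = M'$ on the nose, already handled, but it cannot happen that $M$ and $M'$ differ yet no student strictly prefers $M$, precisely because student-optimality forces $\phi_i(M_i) \le \phi_i(M'_i)$ for all $i$ and some $i$ with $M_i \ne M'_i$ must then have strict inequality (two distinct schools cannot both be a student's top remaining choice under a strict profile). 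Hence either $M = M'$ or $M$ strictly Pareto dominates $M'$; in the former case $\mu(M) = \mu(M')$ and in the latter $\mu(M) < \mu(M')$ by the Lemma. In all cases $\mu(\mathcal{DA}(P)) \le \mu(\mathcal{M}(P))$, which is the claim.

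The main obstacle — really the only thing to be careful about — is the gap between ``$M_i \preceq_i M'_i$ for all $i$'' and the Lemma's hypothesis of strict Pareto domination: one must rule out the degenerate possibility that $M \ne M'$ yet $\phi_i(M_i) = \phi_i(M'_i)$ for every $i$ without $M_i = M'_i$. Under strict preference profiles this cannot occur, since equal rankings force equal schools; if one wishes to accommodate weak preferences, one instead argues directly that $\mu(M) = \sum_i (\phi_i(M_i) - 1) \le \sum_i (\phi_i(M'_i) - 1) = \mu(M')$ termwise from student-optimality, bypassing the Lemma altogether. I would likely present the termwise argument as the cleanest route, remarking that it also recovers the Lemma's conclusion when $M$ genuinely dominates $M'$.
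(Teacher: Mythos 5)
Your proposal is correct and follows essentially the same route as the paper, which simply cites the Gale--Shapley student-optimality of deferred acceptance and appeals to the preceding Lemma. Your version is in fact more careful than the paper's one-line proof: you correctly observe that the Lemma only yields a strict inequality under strict Pareto domination, so the case $\mathcal{DA}(P) = \mathcal{M}(P)$ must be handled separately (or, as you suggest, one argues termwise from $\phi_i(\mathcal{DA}(P)_i) \le \phi_i(\mathcal{M}(P)_i)$), a point the paper glosses over.
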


There can exist two stable matchings with the same preference index. 
However, there is only one lowest preference index stable matching:
\begin{lemma}
\label{L:UniqueStableMin}
The SOSM matching is the unique lowest preference index stable matching for a given preference profile. In other words the inequality in the above Corollary is strict.
\end{lemma}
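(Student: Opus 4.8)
The plan is to derive the statement directly from the Corollary and the earlier Lemma on (Pareto) domination, with the Gale--Shapley optimality property of the deferred acceptance algorithm as the key additional ingredient. Write $M^{*}=\mathcal{DA}(P)$ for the SOSM matching. The Corollary already gives $\mu(M^{*})\le\mu(M)$ for every stable matching $M$, so it remains only to upgrade ``$\le$'' to ``$<$'' whenever $M\neq M^{*}$; this simultaneously shows that $M^{*}$ is the \emph{unique} minimizer of $\mu$ among stable matchings.

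First I would invoke the Gale--Shapley theorem in its school-choice (many-to-one) form: the student-optimal stable matching $M^{*}$ is weakly preferred by every student to any other stable matching, i.e.\ $\phi_i(M^{*}_i)\le\phi_i(M_i)$ for all $i\in I$. Now fix a stable matching $M\neq M^{*}$. Since the two matchings differ, some student $i_0$ has $M_{i_0}\neq M^{*}_{i_0}$; because preference profiles are strict, the weak inequality for $i_0$ is in fact strict, $\phi_{i_0}(M^{*}_{i_0})<\phi_{i_0}(M_{i_0})$. Hence $M^{*}$ (Pareto) dominates $M$ in the sense defined above, and the earlier Lemma yields $\mu(M^{*})<\mu(M)$. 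Taking $M=\mathcal{M}(P)$ for a stable mechanism $\mathcal{M}$ with $\mathcal{M}(P)\neq M^{*}$ gives the promised strict inequality in the Corollary; and the same computation shows that no stable $M$ with $\mu(M)=\mu(M^{*})$ can be distinct from $M^{*}$, so the lowest-index stable matching is unique.

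The step that requires the most care is the clause ``some student is \emph{strictly} better off under $M^{*}$''. This rests on two facts: that Gale--Shapley optimality transfers to the many-to-one setting with capacities $q_s$ (classical), and that two stable matchings which agree on the assignment of every student are the same matching, which uses strictness of the $P_i$ (with indifferences present one should read $M^{*}$ as the student-optimal stable matching relative to a fixed strict tie-break, after which the argument is unchanged). One should also dispatch unmatched students: by the invariance of the set of matched students across stable matchings (the rural-hospitals/McVitie--Wilson phenomenon) the comparison $\phi_{i_0}(M^{*}_{i_0})<\phi_{i_0}(M_{i_0})$ is always between genuine school assignments, or, alternatively, one simply places the empty school $0_{s}$ in the last tier of each $\phi_i$; either way the displayed strict inequality, and hence $\mu(M^{*})<\mu(M)$, is unaffected.
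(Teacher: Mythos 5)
Your proof is correct and follows essentially the same route as the paper, which simply cites the Gale--Shapley student-optimality of SOSM and the uniqueness of its output; you have merely written out in full the strict-domination step (using strictness of preferences to upgrade weak to strict Pareto domination for any distinct stable matching) that the paper leaves implicit. No discrepancies to report.
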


The preference index measures how well ordinal preferences are being honored as a whole. Each time we move to the next-best choice in a student's ranking, this counts as ``1 violation" of their preferences, and we then add up the number of times we make such violations. Thus, perhaps a more apt title would be ``preference dismissal index" since it is a measure of how little the preferences are being ``honored" or ``revered." It should be noted that the preference index assumes that it is the same to give one student their fifth choice and one their first choice (Total=4) as it is to give two students their third choice (Total=4).

%%%%%%%%%%%%%%%%%%%
%%%%%%%%%%%%%%%%%%%
%%%%%%%%%%%%%%%%%%%
%%%%%%%%%%%%%%%%%%%

\section{A cost-minimizing school choice mechanism}
\label{S:HA}

In \S\ref{S:NewCriteria} we defined a new notion, the preference index. We then showed that previously proposed school choice mechanisms do not perform optimally with respect to this criterion. In this section we describe an alternative mechanism geared specifically toward this notion. Our mechanism happens to have several other desirable properties which we elaborate on in the latter parts of this section. 

The work in this section is built upon a combinatorial optimization algorithm known as the Hungarian algorithm. The Hungarian algorithm is traditionally used to find the minimum cost matching in various {\bf min-cost max-flow problems} such as assigning individuals to tasks or determining minimum cost networks in travel \cite{Ku55}, \cite{Ku56}. We note that the algorithm can be processed in polynomial time \cite{Mu57}, hence the mechanism itself can effectively be implemented via a computer program.

As the purpose of the Hungarian algorithm is to find the minimum cost matching, the first step in adapting the algorithm to the school choice problem is to define the cost of any particular matching. But we already have a natural candidate. Indeed the preference reverence index defined in \S\ref{S:NewCriteria}
proves to be a good indicator of the cost of a given matching by measuring the cost in terms of the number and extent of preference violations. Thus the resulting mechanism naturally minimizes the preference reverence index.

%%%%%%%%%%%%%%%%%%%
%%%%%%%%%%%%%%%%%%%
%%%%%%%%%%%%%%%%%%%
%%%%%%%%%%%%%%%%%%%

\subsection{Description}
\label{SS:Description}

In the following we present an elementary description of the Hungarian algorithm equivalent to the original development in \cite{Ku55}. 
For a more sophisticated discussion including computational complexity concerns and an exhaustive investigation of the many variants of the method that lead to impressive complexity improvements, see \cite[Ch.17]{Sc03}.

Let $I$ and $S$ be a set of students and schools, respectively, and assume that a student preference profile ${\bf P}$ is given. Suppose the students are preference incompatible. Since the space $\mathfrak{M}$ of all matchings is finite, $ \mu(\mathfrak{M})$ is finite and therefore there exists some $M\in \mathfrak{M}$ such that $ \mu(M)\le \mu(M')$ for all $ M'\in \mathfrak{M}$. We would like to find such a minimal index matching. In order to do that we will define an associated cost minimization problem.

Let $A =(a_{jk})$ be the $n\times m$ matrix such that $a_{jk}=\phi_{i_{j}}(s_k)$, encoding student preferences. Subtract $1$ from each of the entries to obtain a {\bf cost matrix} $C$, a matrix with no negative entries. 
For now assume that $n=m$, i.e., there is an equal number of students and schools and each school has a capacity of one.
For example for the following preference profile of three students for three schools:
\[
\begin{array}{ccc}
i_1: s_1 \succ s_2 \succ s_3\\
i_2: s_3 \succ s_2 \succ s_1 \\
i_3: s_2 \succ s_3 \succ s_1
\end{array}\]
the matrix $A$ of preferences would be:
\[\begin{array}{c| ccc} & s_{1} & s_{2} & s_{3}\\ \hline i_{1}& 1 & 2& 3\\ \hline i_{2}& 3 & 2 & 1\\ \hline i_{3} & 3 & 1 & 2\end{array}\]
and the associated cost matrix would be:
\[ C = \begin{pmatrix}0&1&2\\
2&1&0\\
2&0&1
\end{pmatrix}.\]
Now the assignment problem reduces to: \emph{Given a cost matrix $C$, pick one element each from each row and each column such that the sum of the selected entries is minimal}. The Hungarian algorithm will then run as follows (cf.\ \cite[Figure 6.1]{RoAn84}):
\begin{enumerate}
\item  Subtract the smallest entry in each row from each entry in that row. [After this stage, all rows have at least one zero entry, and all matrix entries are nonnegative.]
\item  Subtract the smallest entry in each column from each entry in that column. [After this stage, all rows and columns have at least one zero entry, and matrix entries are still nonnegative.]
\item Draw lines through appropriate rows and columns so that all the zero entries of the cost matrix are covered and the minimum number of such lines is used. [There may be several ways to do this, but the main point is that it can be done.]
\item \textbf{Test for optimality:} If the number of covering lines is $n$, then an optimal assignment of all zeroes is possible and we are done; the algorithm terminates. Otherwise, such an assignment is not yet possible, and we proceed to Step 5. 
\item Determine the smallest entry not covered by any line, subtract it from all uncovered entries and add it to all entries covered by both a horizontal and a vertical line. Return to Step 3.
\end{enumerate}
Here is the outcome of the Hungarian algorithm for the preference profile above:
\[\begin{array}{c| ccc} & s_{1} & s_{2} & s_{3}\\ \hline i_{1}& \fbox{1} & 2& 3\\ \hline i_{2}& 3 & 2 & \fbox{1}\\ \hline i_{3} & 3 & \fbox{1} & 2\end{array}\]

We note that Step 5 crucially depends on the following 
\begin{theorem}[Theorem 6.1 \cite{RoAn84}]
If a number is added to or subtracted from all of the entries of any row or column of a cost matrix, then an optimal (minimum cost) assignment for the resulting cost matrix is also an optimal assignment for the original cost matrix.
\end{theorem}

Since the matching algorithm produces the least cost matching \cite{Mu57}, and our cost is represented by the preference index, we see that the resultant matching has the smallest preference index with respect to each student's preferences.
\begin{proposition}
Given a set of preferences, let $M$ be the matching produced by the Hungarian algorithm and let $M'$ be any other matching. Then $\mu(M)\le \mu(M')$.
\end{proposition}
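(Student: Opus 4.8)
The plan is to reduce the proposition to the standard optimality guarantee of the Hungarian algorithm, which is already cited in the excerpt (\cite{Ku55}, \cite{Mu57}). The essential point is that our embedding of the preference profile into the matrix $[A] = (a_{jk})$ with $a_{jk} = \phi_{i_j}(s_k)$ converts the school-choice matching problem into exactly the assignment problem that the Hungarian algorithm solves, and that under this translation the objective being minimized by the algorithm is precisely $\mu$. Once that dictionary is in place, the proposition is immediate.

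First I would make the correspondence between matchings and permutation selections explicit. Since we are in the case $n = m$ with each school of capacity one, a matching $M \in \mathfrak{M}$ that assigns every student a (distinct) school is exactly a choice of one entry from each row and each column of $[A]$ — equivalently, a permutation $\sigma$ of $\{1,\dots,m\}$ with student $i_j$ assigned school $s_{\sigma(j)}$. Next I would observe that the cost the Hungarian algorithm minimizes over such selections is $\sum_{j} a_{j,\sigma(j)} = \sum_{j} \phi_{i_j}(M_{i_j})$, while by definition $\mu(M) = \sum_{i\in I}(\phi_i(M_i) - 1) = \sum_j \phi_{i_j}(M_{i_j}) - m$. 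Thus the two objectives differ only by the additive constant $m$, so a selection minimizes $\sum_j a_{j,\sigma(j)}$ if and only if the corresponding matching minimizes $\mu$. Invoking the correctness of the algorithm (\cite{Mu57}), the matching $M$ it outputs achieves this minimum, so $\mu(M) \le \mu(M')$ for every other matching $M'$; this proves the claim.

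The one place that needs a word of care — and what I expect to be the only real obstacle — is that the matchings $M'$ ranged over in the statement include, a priori, matchings that leave some students unmatched or that are not surjective onto $S$, whereas the permutation argument above only compares full assignments. Since $|I| = |S| = m$ and all entries $\phi_i(s)$ are finite positive integers, any matching leaving a student unmatched (or, more sharply, any non-surjective full assignment when one exists) can only increase the relevant sum, so restricting to permutations loses nothing; one should also note the standing convention (footnote on sizes) that a seat is always available for every student. I would state this reduction explicitly rather than sweep it under the rug, since it is the only step where the translation is not a literal identification. Everything else is bookkeeping: the row/column subtraction steps of the algorithm change all selection-costs by the same constants and hence preserve the argmin, which is the classical invariant underlying the Hungarian method and need not be reproved here.
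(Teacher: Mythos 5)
Your proposal is correct and follows essentially the same route as the paper, which simply cites the least-cost optimality of the Hungarian algorithm (\cite{Mu57}) together with the identification of cost with the preference index. You additionally spell out the bookkeeping the paper leaves implicit --- the permutation correspondence, the additive constant $m$ between $\sum_j \phi_{i_j}(M_{i_j})$ and $\mu(M)$, and the reduction from arbitrary matchings to full assignments --- which only strengthens the argument.
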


%%%%%%%%%%%%%%%%%%%
%%%%%%%%%%%%%%%%%%%
%%%%%%%%%%%%%%%%%%%
%%%%%%%%%%%%%%%%%%%

\subsection{A ``Hungarian" school choice mechanism}
\label{SS:Modifications}

In adapting the Hungarian algorithm to the school choice problem, we must make three key modifications. The construction of the algorithm as we presented it above requires as input an $n \times n$ matrix of non-negative numbers,
and it selects as output a unique entry in each row and each column. We must modify the algorithm to accommodate school capacities, unequal numbers of seats and students, as well as preferences containing different numbers of ranked schools.

Assume columns represent schools and rows represent students in our matrix. To express school capacities, we simply add an extra column for each available seat at a school and enter the same preferences for that column.

The third modification addresses the problem of families submitting incomplete preference profiles. It is not immediately obvious why a district should request that students submit lists of the same length. Furthermore, even if a school district requires students to rank a specific number of schools, there will undoubtedly be students who do not list the required number. Regardless, in order to run the Hungarian algorithm, it is necessary to devise a way of completing student preferences such that each student preference list assigns a rank to each school or seat.

A potential solution is to use dummy variables to complete any missing entries in the matrix. However, this method may invite students to strategize.
Even without complete information, students might be motivated to strategize by only submitting their first choice school, thereby weighting this choice with dummy variables so that the algorithm is more likely to select it.

Alternatively we can fill out the remainder of a student preference profile with an equal ranking for all unranked schools. More specifically if a student's preference profile contains only $r$ ranks, then we assign the rank $r+1$ to all the remaining schools. This incentivizes the completion of preference lists, since otherwise all remaining schools will be treated equally.  For instance, if a family puts only their first choice, all other choices will be considered ``second"; therefore they may get a school which they consider terrible at low cost as measured by the mechanism.  Thus it would behoove them to fill out as many schools as possible if they had a genuine preference for one over another.  This method does not take into account priorities, but it does deflect strategizing. We offer it here as a simpler alternative.

%%%%%%%%%%%%%%%%%%%
%%%%%%%%%%%%%%%%%%%
%%%%%%%%%%%%%%%%%%%
%%%%%%%%%%%%%%%%%%%

\subsection{Properties}
\label{SS:Properties}

Since the Index-Based Hungarian Mechanism ($\mathcal{HM}_i$) selects the ``least cost" matching, when we use preferences as ``costs" we have the following result:
\begin{theorem}
\label{T:MinIndex}
Given a preference profile \textbf{P}, for any mechanism $\mathcal{M}$ \[\mu(\mathcal{HM}_i({\bf P}))\le \mu(\mathcal{M}({\bf P})).\]
\end{theorem}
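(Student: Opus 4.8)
The plan is to reduce Theorem~\ref{T:MinIndex} to the optimality guarantee of the Hungarian algorithm together with the translation of ``cost'' into ``preference index'' that was already set up in \S\S\ref{SS:Description} and \S\S\ref{SS:Modifications}. First I would fix a preference profile $\textbf{P}$ and an arbitrary matching mechanism $\mathcal{M}$, so that $\mathcal{M}(\textbf{P})$ is simply some matching $M' \in \mathfrak{M}$; since the claimed inequality quantifies over all mechanisms, it suffices to show $\mu(\mathcal{HM}_i(\textbf{P})) \le \mu(M')$ for every $M' \in \mathfrak{M}$, which is exactly the Proposition already stated at the end of \S\S\ref{SS:Description} once we account for the modifications. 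So the real content is to check that passing from the idealized square-matrix setting to the modified mechanism $\mathcal{HM}_i$ (extra columns per seat, dummy rows/columns, completion of incomplete lists) does not break the correspondence between the total weight selected by the algorithm and the value $\mu$ of the resulting matching on the original instance.

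The key steps, in order, would be: (1) Recall the cost matrix $[A] = (a_{jk})$ with $a_{jk} = \phi_{i_j}(s_k)$ and, after the modifications of \S\S\ref{SS:Modifications}, the enlarged matrix $[\tilde A]$ on seats and (possibly) dummy rows/columns; observe that a selection of one entry per row and column in $[\tilde A]$ corresponds bijectively to a matching $M$ of students to seats (with dummy assignments read as ``unmatched''). (2) Verify that the total weight $\sum a_{jk}$ over a selected transversal equals $\sum_{i \in I}\phi_i(M_i)$ when dummy entries are assigned weight $0$ (or a fixed constant common to all of them, so it cancels in comparisons), which differs from $\mu(M) = \sum_{i\in I}(\phi_i(M_i)-1)$ only by the additive constant $|I|$; since this constant is the same for every matching, minimizing total algorithm weight is equivalent to minimizing $\mu$. (3) Invoke the correctness theorem for the Hungarian algorithm \cite{Mu57} (also cited as \cite{Ku55},\cite{Ku56}): the transversal it outputs has minimum total weight among all transversals of $[\tilde A]$. (4) Conclude that $\mathcal{HM}_i(\textbf{P})$ realizes $\min_{M \in \mathfrak{M}} \mu(M)$, hence $\mu(\mathcal{HM}_i(\textbf{P})) \le \mu(M') = \mu(\mathcal{M}(\textbf{P}))$ for the arbitrary $\mathcal{M}$ chosen at the start.

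I would expect the main obstacle to be step~(2): making precise that the bookkeeping introduced by the three modifications leaves $\mu$ invariant up to a global additive constant. One has to be careful that duplicated seat-columns carry identical weights so that any valid transversal still corresponds to a genuine matching respecting capacities $q_s$; that dummy rows contribute nothing distinguishing (an open seat should not be ``cheaper'' or ``more expensive'' than another open seat); and that under the assumption $|S| \ge |I|$ (Footnote~\ref{F:Sizes}) no actual student is forced onto a dummy column, so $M'_i \in S$ for all $i$ and $\mu(M')$ is well-defined in the sense used in \S\S\ref{SS:Index}. Once this translation is pinned down, the rest is immediate from the cited optimality of the Hungarian algorithm, and indeed the statement is essentially a restatement of the earlier Proposition; the theorem is really an occasion to record that the modifications of \S\S\ref{SS:Modifications} do not cost us the optimality property.
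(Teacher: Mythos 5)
Your proposal is correct and follows essentially the same route as the paper: the paper states Theorem~\ref{T:MinIndex} with no separate proof, treating it as immediate from the least-cost guarantee of the Hungarian algorithm (citing \cite{Mu57}) together with the identification of cost with the preference index, which is exactly your steps (3)--(4). Your steps (1)--(2) --- checking that seat duplication, dummy rows/columns, and the global additive constant $|I|$ in passing from $\sum_i \phi_i(M_i)$ to $\mu(M)$ do not disturb the correspondence between minimum total weight and minimum $\mu$ --- are bookkeeping the paper leaves implicit, and they are handled correctly.
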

The Index-Based Hungarian Mechanism meets our utilitarian standard of student optimal matchings. Furthermore, it satisfies the standard notion of efficiency:
\begin{corollary} Given a set of preferences \textbf{P}, let $\mathcal{HM}_i(\textbf{P})$ be the matching produced by the Index-Based Hungarian Mechanism. Then $\mathcal{HM}_i(\textbf{P})$ is (Pareto) efficient.
\end{corollary}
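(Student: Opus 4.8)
The plan is to argue by contradiction, leaning entirely on the two facts already available: the Lemma (Pareto domination strictly lowers the preference index) and Theorem \ref{T:MinIndex} ($\mathcal{HM}_i$ produces a matching of minimal preference index). First I would suppose, for contradiction, that $\mathcal{HM}_i(\textbf{P})$ is \emph{not} (Pareto) efficient. By the definition of (Pareto) efficiency, this means there exists a matching $M'\in\mathfrak{M}$ that (Pareto) dominates $M := \mathcal{HM}_i(\textbf{P})$.

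Next I would invoke the Lemma with the roles ``$M$'' and ``$M'$'' of the Lemma played by $M'$ and $M$ respectively: since $M'$ dominates $M$, the Lemma yields the strict inequality $\mu(M') < \mu(M) = \mu(\mathcal{HM}_i(\textbf{P}))$. On the other hand, Theorem \ref{T:MinIndex} applied with $\mathcal{M}$ taken to be any mechanism outputting $M'$ on input $\textbf{P}$ (or simply the observation underlying that theorem, that $\mathcal{HM}_i(\textbf{P})$ minimizes $\mu$ over all matchings) gives $\mu(\mathcal{HM}_i(\textbf{P})) \le \mu(M')$. Chaining these two inequalities gives $\mu(M') < \mu(M') $, a contradiction, so no such dominating $M'$ exists and $\mathcal{HM}_i(\textbf{P})$ is (Pareto) efficient.

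There is essentially no hard computational step here; the only thing worth a sentence of care is making sure the ambient set of matchings is the same in both invoked results — that is, that ``$M'$ (Pareto) dominates $M$'' is being tested among genuine matchings of students to real school seats, which is exactly the same class over which Theorem \ref{T:MinIndex} asserts minimality (after the capacity/dummy-row bookkeeping of \S\S\ref{SS:Modifications} is undone, a dominating assignment on the padded matrix restricts to a dominating matching on $I\times S$, and conversely). Once that identification is noted, the contradiction closes immediately, so the only ``obstacle'' is bookkeeping rather than mathematics.
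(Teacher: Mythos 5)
Your proposal is correct and follows essentially the same route as the paper's own proof: assume a Pareto-dominating matching exists, apply the lemma that domination strictly lowers $\mu$, and contradict the minimality of $\mu(\mathcal{HM}_i(\textbf{P}))$ over all matchings. The extra remark about identifying the class of matchings is a harmless bit of bookkeeping the paper leaves implicit.
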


Next we briefly examine the performance of $\mathcal{HM}_i$ with respect to strategic action. Here our exposition is much abridged; the reader is referred to our full research paper for a complete strategy analysis. 

We first assert: 

\begin{proposition}
The $\mathcal{HM}_i$, under complete information, is not strategyproof.
\end{proposition}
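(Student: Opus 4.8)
The plan is to refute strategyproofness directly, by exhibiting a single preference profile on which one student has a strict incentive to misreport. The only ingredient I would need from the preceding material is Theorem~\ref{T:MinIndex}: since $\mathcal{HM}_i$ always returns a matching of minimum preference index, whenever the $\mu$-minimizing matching happens to be \emph{unique} the output of $\mathcal{HM}_i$ is completely determined, independently of how the Hungarian algorithm breaks ties. So it suffices to produce (i) a complete profile $\textbf{P}$ whose $\mu$-minimizer $M$ is unique and does not give some student $i_1$ her favorite school, and (ii) a false report $\phi'_1$ for $i_1$ such that the amended profile $\textbf{P}'$ (equal to $\textbf{P}$ outside $i_1$'s entry) again has a unique $\mu$-minimizer $M'$ with $M'_{i_1}\succ_{i_1}M_{i_1}$ under $i_1$'s true ranking; by the paper's definition this is precisely a failure of strategyproofness, and it happens under complete information since $i_1$ can compute $\phi'_1$ from everyone else's reports. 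Throughout I would certify optimality cheaply via the observation that for $|I|=|S|$ every matching has total rank $\sum_i\phi_i(M_i)$ at least the sum of the column minima of the rank matrix $[A]$; an assignment hitting every column minimum is thus $\mu$-optimal, and it is the \emph{unique} optimum if in every column the minimum is attained in only one row.

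For the explicit counterexample I would take $|I|=|S|=4$ with unit capacities and the strict complete profile
\[
\begin{array}{c}
i_1: s_1 \succ s_2 \succ s_3 \succ s_4\\
i_2: s_1 \succ s_3 \succ s_2 \succ s_4\\
i_3: s_3 \succ s_4 \succ s_1 \succ s_2\\
i_4: s_4 \succ s_1 \succ s_3 \succ s_2
\end{array}
\]
whose rank matrix $[A]$ (with $(j,k)$ entry $\phi_{i_j}(s_k)$) is
\[
\begin{pmatrix} 1 & 2 & 3 & 4\\ 1 & 3 & 2 & 4\\ 3 & 4 & 1 & 2\\ 2 & 4 & 3 & 1\end{pmatrix}.
\]
Its column minima sum to $1+2+1+1=5$, realized by $M=(i_1\mapsto s_2,\ i_2\mapsto s_1,\ i_3\mapsto s_3,\ i_4\mapsto s_4)$; since the minimum is attained in a single row in each of columns $s_2,s_3,s_4$, this $M$ is the unique $\mu$-minimizer, so $\mathcal{HM}_i(\textbf{P})=M$ and $i_1$ gets her second choice $s_2$. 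Now let $i_1$ instead report $\phi'_1:\ s_1\succ s_3\succ s_4\succ s_2$ — truthful at the top but demoting her true second choice to last — so that only the first row of $[A]$ changes, to $(1,4,2,3)$. The new column minima sum to $1+3+1+1=6$, realized by $M'=(i_1\mapsto s_1,\ i_2\mapsto s_2,\ i_3\mapsto s_3,\ i_4\mapsto s_4)$, again uniquely in each column, so $\mathcal{HM}_i(\textbf{P}')=M'$ and $i_1$ now receives $s_1$. As $s_1\succ_{i_1}s_2$ truthfully, the deviation is strictly profitable, so $\mathcal{HM}_i$ is not strategyproof under complete information. (Priorities play no role here, since every submitted list is already complete; the manipulation simply exploits the global nature of $\mu$: inflating the reported cost of a school one risks being assigned makes the algorithm unwilling to incur it and deflects the assignment to a more preferred school.)

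The verification above is routine — a comparison of a few column-minima sums — so I expect essentially all of the work to be in \emph{finding} the profile. The genuinely delicate constraint is that \emph{both} $\mu$-minimizers be unique, so that the gain is a property of the mechanism rather than an artifact of some tie-breaking rule; that is why I would restrict the search to profiles where the column-minimum bound is met by exactly one assignment. One must simultaneously arrange that the truthful optimum withholds $i_1$'s favorite school, that it be unique, and that a single downward lie by $i_1$ flips it (again uniquely) to her favorite — and a four-school instance is about the smallest in which there is enough room to do all three at once.
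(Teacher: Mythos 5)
Your proposal is correct and takes essentially the same approach as the paper: both exhibit an explicit $4\times 4$ counterexample profile in which one student strictly gains by misreporting (the paper has $i_4$ swap her top two choices to move from her third to her second choice, while you have $i_1$ demote her true second choice to last to capture her first choice). Your column-minima certificate that both the truthful and the manipulated profiles have \emph{unique} $\mu$-minimizers is a worthwhile addition, since the paper asserts uniqueness of its two optimal matchings without verification, and that uniqueness is what makes the gain attributable to the mechanism rather than to tie-breaking.
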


The proof involves a concrete construction of a a preference profile with which it is possible for a student to strategize in order to improve her outcome.
Thus under complete information, the Index-Based Hungarian Mechanism is not always immune to strategic action.

However, we are more interested in its performance under incomplete information. In this latter situation, the question is not whether it is \emph{possible} for a student to receive a preferred school by lying, but whether any student has a \emph{rational} motive to falsify preferences. Our natural assumption is that in the macro school choice environment, players are not privy to everyone else's preferences. 

Unfortunately it turns out that there exist preference profiles in which one can strategize under incomplete information in $\mathcal{HM}_i$. Thus our mechanism is not necessarily strategyproof under incomplete information.
Thus $\mathcal{HM}_i$ is Pareto efficient and minimizes the preference index, but it is NOT strategyproof, even with the assumption of incomplete information. (In the full paper we offer possible ways to make up for this weakness.) It is also obviously not stable.

We observed earlier that the SOSM outcome is the minimum preference index stable matching. 
Thus in our framework, we can identify the SOSM outcome as the solution to the school choice problem that minimizes the preference index subject to the constraint of stability. Viewed this way, the $\mathcal{HM}_i$ outcome is the solution to the school choice problem that minimizes the preference index with no constraint. Then, one may ask if making everyone best off relative to each other is ``worth" potentially high numbers of priority violations. The cost of inefficiency imposed by stability has been studied extensively in the literature, but the cost of instability imposed by efficiency has been modeled and studied much less often (but also see \cite{Ta02} for a discussion of a similar trade-off between efficiency and equity).

We can offer a remedy to those who prefer fewer priority violations to more and who are willing to sacrifice some efficiency. With respect to the preference index, we showed that the $\mathcal{HM}_i$ outcome is optimal. However, determining the ``next best matching" with respect to the preference index is paramount for a district wishing to minimize priority violations. We can do so by listing assignments in order of increasing preference index (cf.\ \cite{Mu68}). Because the SOSM outcome is the lowest preference index stable matching, it will be the first stable matching found in this list.

Alternatively, one may start with the $\mathcal{HM}_i$ output and apply cycle improvements in the spirit of \cite{ErEr08}. Even though in \cite{ErEr08} a  \emph{stable cycle improvement} is only defined for stable starting points, it is possible to develop a modification of the concept to include starting situations which are unstable and aim toward a more stable assignment. Further analysis of this is beyond the scope of this note, but in such a scenario, each improvement would correspond to a decrease in priority violations, and thus produce a more stable matching.  Counting the priority violations, policy makers can determine what a particular student's priority is worth in terms of others' preferences.

An important observation to make is that in basing our mechanism on the preference index, we are merely attempting to refine a (rather coarse) partial order. This is not to say that the $\mathcal{HM}_i$ output will Pareto dominate matchings obtained via all previous mechanisms. Certainly the $\mathcal{HM}_i$ will not Pareto dominate TTC, given that TTC is Pareto efficient. Similarly, when the SOSM outcome is Pareto efficient, $\mathcal{HM}_i$ will not Pareto dominate it.

%%%%%%%%%%%%%%%%%%%%%%
%%%%%%%%%%%%%%%%%%%%%
%%%%%%%%%%%%%%%%%%%%%
%%%%%%%%%%%%%%%%%%%%%

\subsection{An implementation issue: multiple minima}
\label{SS:ImplementationMultMin}

We saw that the ordering induced by the preference index is not strict. Indeed a given preference profile might have multiple minimum preference index solutions. 
The underlying theoretical problem of finding all possible minimum cost assignments by the Hungarian algorithm was addressed and answered in \cite{FuTo92} (see \cite{Fu94} for an improvement on the main (polynomial time) algorithm used in \cite{FuTo92} and \cite{MaPl05} for more recent work in a similar vein).  Thus it is possible to find all minimum preference index solutions. This in turn raises a new question:
How does one choose among all these minimum preference index solutions? We propose two possible approaches to deal with this issue.

\begin{enumerate}
\item If one intends to promote fairness by narrowing the discrepancies between the rankings of student assignments, then the matching with the minimum variance across individual student preference indices should be chosen. 
\item  If one intends to choose ``the most stable" matching, then the matching with the fewest instances of priority violations should be chosen. Here, we look at the number of students who had their priorities violated. 
\end{enumerate}

Of course, one may use both of these in succession.

This incidentally helps us resolve a possible concern about the Hungarian algorithm: its dependence on the order of the rows and the columns of the input matrix. Especially when there are multiple minimal index solutions, the order in which students or schools are listed may indeed affect the outcome, and the output matching may be different in different cases (though any two outcomes in such a scenario will have the same index). However if we modify our mechanism to look instead for all possible minimum cost matchings, this no longer creates a problem.
Thus, the order of the rows or columns ultimately doesn't matter because: (1) If there is a unique cost minimizing solution, the order does not affect the outcome; and (2) if there are multiple cost minimizing solutions, we can  find all of them using our mechanism, with  adaptations a la \cite{FuTo92}.

%%%%%%%%%%%%%%%%%%%
%%%%%%%%%%%%%%%%%%%
%%%%%%%%%%%%%%%%%%%
%%%%%%%%%%%%%%%%%%%

\section{Conclusion}
\label{S:Conclusion}

Current school choice mechanisms focus on balancing student preferences and school priorities, and the resulting matches sacrifice desirable characteristics.Ê Since a good public education is a scarce resource, there is no way to assign students to schools in such a way that all students attend top schools.Ê Much of the controversy in school choice concerns the ability of parents to send their children to their school of choice.Ê Thus in our approach we chose to focus on student preferences.Ê To this end, we developed a new criterion, or measure, by which to evaluate the quality of a matching that results from a school choice mechanism.Ê
The new mechanism presented here was adapted from the well known Hungarian algorithm \cite{Ku55} which was developed as a combinatorial solution to the assignment problem.Ê Our modifications included a re-interpretation of assignments taking into account school capacities and required that we be allowed to ``complete'' submitted student preference profiles. 

Our focus on student preferences over school priorities is natural in the current climate in which the public debate over charter schools and school vouchers rages in an attempt to offer parents more control over their children's educational choices. The fact that stability often comes at a loss in efficiency has been discussed at length in the School Choice literature. However, there might be cases in which, if we were to allow for simply one student with a violated priority, then we could make considerable efficiency gains (in terms of lower preference index).Ê Our method allows for viewing ``degrees of stability'' (e.g. ``highly stable''  would correspond to a ``low number'' of priority violations) in light of efficiency gains (as determined by the preference index). In cases where it is possible to achieve large efficiency gains while remaining highly or semi-stable, it might be advantageous to do so through our Hungarian Mechanism. Our method should also appeal to families, since their preferences are taken into account ``first.''Ê As assignments shift in an attempt to minimize preference violations and/or priority violations, some balance might be reached.

%%%%%%%%%%%%%%%%
%%%%%%%%%%%%%%%%
%%%%%%%%%%%%%%%%

\bibliographystyle{aaai}   

\bibliography{SchoolChoiceBibliography}

%%%%%%%%%%%%%%%%%%%%%
%%%%%%%%%%%%%%%%%%%%%
%%%%%%%%%%%%%%%%%%%%%
%%%%%%%%%%%%%%%%%%%%%

\end{document}